\documentclass[12pt]{amsart}
\usepackage[utf8]{inputenc}
\usepackage{amsmath,amsthm,amscd,amssymb}
\usepackage[margin=1in]{geometry}
\usepackage{color}
\usepackage[colorlinks=true, linkcolor=blue, citecolor=blue, urlcolor=blue]{hyperref}

\theoremstyle{plain}

\theoremstyle{definition}

\theoremstyle{remark}

\theoremstyle{plain}
\newtheorem{theorem}{Theorem}[section]
\newtheorem{lemma}[theorem]{Lemma}

\theoremstyle{definition}

\newtheorem{remark}[theorem]{Remark}

\newcommand{\bR}{\mathbb{R}}
\newcommand{\sq}{\square}

\begin{document}

\title{A Differential Harnack Inequality for the FitzHugh-Nagumo Equation}

\author[Cao]{Xiaodong Cao}
\address{Department of Mathematics, Cornell University \\ Ithaca, NY 14853, USA}
\email{xiaodongcao@cornell.edu}

\author[Fluck]{Harry Fluck}
\address{Department of Mathematics, Cornell University \\ Ithaca, NY 14853, USA}
\email{hpf5@cornell.edu}

\author[Teng]{Sophia Teng}
\address{ Cornell University \\ Ithaca, NY 14853, USA}
\email{steng1129@gmail.com}

\date{\today}
\begin{abstract}
In this paper we develop a Li-Yau-Hamilton (LYH) type differential Harnack estimate for positive solutions to the FitzHugh-Nagumo equation on $\bR^n$. We then use our LYH-differential Harnack inequality to prove several properties about positive solutions to the equation, including a classical Harnack inequality and a lower bound for the speed of traveling wave solutions. 
\end{abstract}
\maketitle

\section{Introduction}
Differential Harnack inequalities have become one of the most powerful tools in the study of parabolic partial differential equations. Beginning with the pioneering work of P. Li and S.-T. Yau \cite{ly86} on the heat equation and R. Hamilton's subsequent work on geometric evolution equations \cite{hamilton93}, such inequalities provide pointwise estimates involving both the solution and its derivatives. By integrating along space-time curves, differential Harnack inequalities yield classical Harnack inequalities comparing solutions at different points in space and time. They have also found numerous applications in the study of geometric flows, such as gradient estimates, monotonicity formulas, and Liouville-type results.\\

In this paper, we study positive smooth solutions
$f:\mathbb R^n\times [0,\infty)\rightarrow\mathbb R$ of the scalar
FitzHugh--Nagumo equation
\begin{equation}\label{eq:1}
f_t=\Delta f-f(1-f)(a-f),
\qquad -1< a<0 .
\end{equation}

The FitzHugh–Nagumo model was originally introduced by FitzHugh \cite{FitzHugh1961} and Nagumo \cite{Nagumo1962} as a simplification of the Hodgkin–Huxley equations \cite{HodgkinHuxley1952} to describe the propagation of electrical impulses in nerve cells. Since then it has become one of the fundamental mathematical models for excitable media, with applications ranging from neuroscience and cardiac electrophysiology to combustion theory, chemical reactions, and pattern formation \cite{KeenerSneyd,Murray2002}. Under suitable assumptions, the full FitzHugh–Nagumo system reduces to the above scalar bistable reaction–diffusion equation, which is commonly referred to as the FitzHugh–Nagumo equation \cite{KeenerSneyd}. The parameter $a$ determines the bistable structure of the nonlinearity. In the limiting case $a=-1$, the equation
\eqref{eq:1} reduces to the Newell-Whitehead-Segel equation (\cite{nw69, segel})
\[
f_t=\Delta f+f(1-f)(1+f),
\]
which appears in the study of pattern formation and nonlinear wave
propagation. Differential Harnack inequalities for the Newell–Whitehead–Segel equation were previously established by Booth et al \cite{bbc19}, corresponding to the endpoint case $a=-1$. The goal of the present paper is to extend this result to the remaining bistable regime $-1<a<0$.

The proof requires several new ingredients compared with the endpoint case $a=-1$. In particular, the additional parameter $a$ introduces new nonlinear terms into the evolution of the Harnack quantity, requiring a refined choice of auxiliary constants and a more delicate maximum principle argument. As applications of the differential Harnack inequality, we derive a classical Harnack inequality and obtain quantitative estimates for traveling wave solutions.

The mathematical theory of the FitzHugh-Nagumo equation has been extensively developed over the past several decades. Topics that have received considerable attention include the existence and uniqueness of solutions, qualitative properties, asymptotic behavior, stability, bifurcation theory, and singular perturbation analysis. Of particular interest are traveling wave solutions, which describe the propagation of excitation fronts in excitable media. Their existence, stability, and geometric structure have been investigated using phase-plane analysis, geometric singular perturbation theory, and related techniques; see, for example, \cite{LiuVanVleck2005,VolpertVolpertVolpert1994,Smoller1994}. Explicit traveling wave solutions have also been constructed using the first integral method and related approaches \cite{lg06}.

Despite this extensive literature, differential Harnack inequalities for the FitzHugh–Nagumo equation appear to be absent. Since differential Harnack estimates have proved to be fundamental tools for understanding nonlinear parabolic equations, it is natural to ask whether analogous estimates exist for the FitzHugh–Nagumo equation and whether they can be used to derive new qualitative information about its solutions.\\

Our main goal is to establish a Li--Yau--Hamilton type differential
Harnack inequality for positive solutions of \eqref{eq:1}. Let
\[
u=\log f
\]
and define
\[
F(u)=(1-e^u)(e^u-a).
\]
We consider the Harnack quantity
\[
H=\alpha\Delta u+\beta|\nabla u|^2+cF(u)+\varphi(t),
\]
where $\alpha,\beta,c$ are constants satisfying appropriate conditions and
$\varphi(t)$ is an explicitly chosen time-dependent correction term.
Our main theorem proves that
\[
H\geq 0
\]
for all positive solutions of the FitzHugh--Nagumo equation.

\renewcommand{\theenumi}{\roman{enumi}}

\begin{theorem}
\label{maintheorem}
Let $f > 0$ be a smooth solution to \eqref{eq:1}, define $u = \log f$ and
\begin{align*}
    F(u) &= (1 - e^u)(e^u - a).
\end{align*}
Then, for every $(x,t)\in\mathbb{R}^n\times(0,\infty)$, we have
\begin{equation}\label{H}
H (x,t)=\alpha \Delta u + \beta |\nabla u|^2 + c F(u) + \varphi(t) \geq 0,
\end{equation}
 provided that:
\begin{enumerate}
    \item $ \alpha >\beta >0$; \label{iden:main_thm_1}
    \item  $\frac{n\alpha^{2}}{2(\alpha-\beta)}>c> \frac{n\alpha^{2}(2\alpha+\beta) }{3n\alpha^{2}-2\beta(\alpha-\beta)}$, $c\neq \frac{n\alpha^{2}}{4(\alpha-\beta)}$; \label{iden:main_thm_2}
\end{enumerate}
and \[\varphi(t)=\frac{\eta}{1-e^{2\eta\mu t}}(\frac{1}{\nu-\mu}e^{2\mu\eta t}-\frac{1} {\nu+\mu})+A,\]
where $ A,\mu,\nu, \eta>0 $ are explicit positive constants depending on $\alpha,\beta, a, c$ and the dimension $n$.
\end{theorem}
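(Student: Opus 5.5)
We follow the Li--Yau--Hamilton maximum principle scheme used by Booth et al.\ for the endpoint case $a=-1$. First we rewrite the equation in terms of $u=\log f$. Since $f_t=\Delta f-f(1-f)(a-f)$ and $(1-f)(a-f)=-(1-e^u)(e^u-a)=-F(u)$, we obtain
\[
u_t=\Delta u+|\nabla u|^2+F(u).
\]
Set $\mathcal{L}=\partial_t-\Delta-2\nabla u\cdot\nabla$. We compute the evolution of each term of $H$. Differentiating the equation and using the Bochner formula gives
\begin{align*}
\mathcal{L}(\Delta u)&=2|\nabla^2u|^2+F'(u)\Delta u+F''(u)|\nabla u|^2,\\
\mathcal{L}(|\nabla u|^2)&=-2|\nabla^2u|^2+2F'(u)|\nabla u|^2,\\
\mathcal{L}(F(u))&=F'(u)F(u)-F''(u)|\nabla u|^2-2F'(u)|\nabla u|^2.
\end{align*}
Combining these,
\[
\mathcal{L}H=2(\alpha-\beta)|\nabla^2u|^2+\alpha F'\Delta u+(\alpha-c)F''|\nabla u|^2+2(\beta-c)F'|\nabla u|^2+cF'F+\varphi'.
\]
The plan is to bound the right-hand side below by an expression of the form $\frac{1}{\nu'}H^2-(\text{lower order})\cdot H+\dots$, so that a comparison with the Riccati ODE governing $\varphi$ applies.

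Next we eliminate $\Delta u$ using $\alpha\Delta u=H-\beta|\nabla u|^2-cF-\varphi$, and use $|\nabla^2u|^2\ge(\Delta u)^2/n$. This produces the leading term $\frac{2(\alpha-\beta)}{n\alpha^2}\bigl(H-\beta|\nabla u|^2-cF-\varphi\bigr)^2$. We expand the square. The cross terms in $|\nabla u|^2$ and $F$ must be absorbed by the good quartic term $\frac{2(\alpha-\beta)\beta^2}{n\alpha^2}|\nabla u|^4$ and by $cF'F$. The key point is that $e^u=f$, so $F,F',F''$ are polynomials in $f$. For instance, $F'=f(1+a-2f)$ and $F''=f(1+a-4f)$, while $F$ is bounded above by $(1-a)^2/4$. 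Terms such as $F''|\nabla u|^2$ can therefore be controlled by $\epsilon|\nabla u|^4+C(\epsilon)f^2(\cdots)^2$. The terms that are polynomial in $f$ must then be dominated, for $f$ large, by the positive quartic contribution $\frac{2(\alpha-\beta)c^2}{n\alpha^2}F^2\sim f^4$, together with $cF'F\sim 2cf^4$ when $F<0$. I expect condition (ii) to be exactly what makes these leading-order coefficients work. The upper bound on $c$ keeps the $F\,|\nabla u|^2$ cross term controllable by the product of the quartic terms, and the lower bound makes the net $f^4$ coefficient positive. The excluded value $c=\frac{n\alpha^2}{4(\alpha-\beta)}$ presumably kills one of the constants $\mu,\nu,\eta$, since it would produce a division by zero. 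After this step we have, at points where $H\le 0$,
\[
\mathcal{L}H\ge \frac{2(\alpha-\beta)}{n\alpha^2}\bigl[(H-\varphi)^2+\dots\bigr]+\varphi'-C_0,
\]
with all terms linear in $H$ collected, using Cauchy--Schwarz with constants chosen in terms of $\mu,\nu$.

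We then choose $\varphi$ to solve a Riccati equation of the form $\varphi'=-\frac{1}{\nu}(\varphi-A)^2+\eta^2\cdot(\text{const})$. The given formula, with $\varphi\to+\infty$ as $t\to0^+$ and $\varphi\to$ a finite limit as $t\to\infty$, is of exactly this type. We fix $A$ large enough to absorb the constant $C_0$, and then $\eta,\mu,\nu$ so that the residual terms in $\mathcal{L}H$ are nonnegative whenever $H\le0$. Finally, we apply the maximum principle. Since $\varphi(t)\to+\infty$ as $t\to0$, we have $H>0$ for small $t$. If $H$ first reaches $0$ at some point, then at that point $\mathcal{L}H\le 0$, while the estimate gives $\mathcal{L}H>0$, which is a contradiction. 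On the noncompact space $\mathbb{R}^n$ we would either assume bounded geometry and bounded derivatives and use a cutoff-function version of the argument, or apply Hamilton's maximum principle for noncompact manifolds.

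The main obstacle is the algebraic step. We must show that, under condition (ii), the polynomial in the variables $(|\nabla u|^2,f)$ coming from the expansion is bounded below uniformly for $f>0$, with $a\in(-1,0)$ contributing new cubic and quadratic terms in $f$ that were absent for $a=-1$. I would first try completing the square in $|\nabla u|^2$ and then verifying positivity of the resulting quartic in $f$ for $f\ge 1$, bounding the range $f\in(0,1)$ by a constant that is absorbed into $A$.
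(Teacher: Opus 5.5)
Your overall scheme is the paper's: take a first zero of $H$, use $|\nabla^2u|^2\ge(\Delta u)^2/n$, substitute $\alpha\Delta u=H-\beta|\nabla u|^2-cF-\varphi$, and compare with a Riccati ODE for $\varphi$. However, there is a genuine gap exactly where condition (ii) is used, and the evolution formula feeding that step is wrong.

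\textbf{The evolution formula.} In fact $\mathcal{L}F=F'F-F''|\nabla u|^2-F'|\nabla u|^2$: the $F'|\nabla u|^2$ coming from $F'u_t$ partly cancels the $-2F'|\nabla u|^2$ from the drift. So $\mathcal{L}H$ contains $(2\beta-c)F'|\nabla u|^2$, not $2(\beta-c)F'|\nabla u|^2$. Equivalently, the reaction part of the $|\nabla u|^2$-coefficient is $(1+a)(\alpha+2\beta-2c)f+(6c-4\alpha-4\beta)f^2$, as in Lemma \ref{lem:computations}. With your coefficient you would get a different, weaker and unjustified, lower bound on $c$.

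\textbf{The heuristic for the algebra.} You must substitute for $\Delta u$ in $\alpha F'\Delta u$, since that term has no sign. Once you do, the resulting $-cFF'$ cancels $cF'F$ identically, so there is no $2cf^4$ to use. Nor can $\gamma c^2F^2$ absorb anything, with $\gamma=\frac{2(\alpha-\beta)}{n\alpha^2}$. It comes, together with $\gamma\beta^2|\nabla u|^4$, $\gamma\varphi^2$ and the bad cross terms $2\gamma cF(\beta|\nabla u|^2+\varphi)$, from the single square $\gamma(\beta|\nabla u|^2+cF+\varphi)^2$. That square vanishes whenever $f>1$ and $\beta|\nabla u|^2+\varphi=-cF$, so it provides no spare $f^4$.

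\textbf{The missing idea.} Expand that square at the zero of $H$, discard $\gamma c^2F^2$, and regard the coefficients of $\varphi$ and of $|\nabla u|^2$ as quadratics in $f=e^u$. They are $2(1-c\gamma)f^2+(1+a)(2c\gamma-1)f-2c\gamma a$ and $2\gamma\beta\varphi-2\gamma\beta ac+\omega f^2+\delta f$, where $\omega=6c-4\alpha-2\beta(1+c\gamma)$ and $\delta=(1+a)(\beta(1+2c\gamma)+\alpha-2c)$. Condition (ii) is precisely $1-c\gamma>0$ (the upper bound) and $\omega>0$ (the lower bound). Completing the square in $f$ then bounds both coefficients below uniformly on $(0,\infty)$.

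Your plan of checking a quartic in $f$ for $f\ge1$ and absorbing $f<1$ into $A$ misses that these $f$-dependent factors multiply the unbounded quantities $\varphi$ and $|\nabla u|^2$. What matters is the sign of their leading coefficients. Completing the square against $\gamma\beta^2|\nabla u|^4$ then leaves $\varphi_t+\gamma\varphi^2-K\varphi-C$. This is matched to $\varphi_t+(\mu\varphi)^2-(\eta+\nu\varphi)^2$ with $\mu^2-\nu^2=\gamma$, $2\eta\nu=K$ and $\gamma A=\eta\nu$. The excluded value of $c$ is $2c\gamma=1$, where $K$, and hence $A$ and $\nu$, vanish; it is not a division by zero.

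\textbf{Noncompactness.} The theorem assumes nothing at spatial infinity, so adding bounded-derivative hypotheses changes the statement. The paper instead runs the argument for $H+\phi_k$ on $[-k,k]^n$, with $\phi_k=\sum_i b[(x_i-k)^{-2}+(x_i+k)^{-2}]$. The resulting $x$-dependence of $\varphi+\phi_k$ produces $-\Delta\phi_k$ and $-2\nabla u\cdot\nabla\phi_k$. The latter is absorbed by $2\gamma\beta(\varphi+\phi_k)|\nabla u|^2$, which is why one needs $\gamma\phi_k^2-\Delta\phi_k-|\nabla\phi_k|^2/(2\gamma\beta\phi_k)>0$ with $b$ independent of $k$ (Lemma \ref{lem:spacial_pos}).
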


\begin{remark}It is straightforward to verify that the interval in \eqref{iden:main_thm_2} is nonempty whenever $n\geq 2$. In dimension $n=1$, the interval is nonempty provided $2\beta > \alpha$. Also, $A>0$  under the conditions that $a> -1$ and $c\neq \frac{n\alpha^{2}}{4(\alpha-\beta)}$. We refer the reader to the  proof of Theorem \ref{maintheorem} (Section \ref{sec:main_proof}) for the precise definition of the above constants.
\end{remark}


\begin{remark}The differential Harnack inequality obtained here is unlikely to be sharp. For several classical parabolic equations, including the heat equation and the Ricci flow, sharp differential Harnack inequalities are characterized by distinguished self-similar solutions. It would therefore be interesting to determine whether the Harnack quantity introduced here can be improved to a sharp estimate, whether equality characterizes traveling wave solutions, and whether analogous differential Harnack inequalities hold for the full FitzHugh–Nagumo system or for more general reaction–diffusion equations.
\end{remark}

The proof follows the maximum principle approach introduced by Hamilton.
The main difficulty comes from controlling the nonlinear reaction terms
which arise in the evolution equation of the Harnack quantity. By choosing
the constants appropriately and introducing a suitable time-dependent
function $\varphi(t)$, we are able to control these additional terms and
obtain a differential Harnack estimate.

As a first application, we integrate the differential Harnack inequality
along space-time curves to obtain a classical Harnack inequality. This gives
a comparison between the values of a positive solution at different points
in space and time, extending the classical Li-Yau estimate to this nonlinear
reaction-diffusion setting.

\begin{theorem}\label{thm:app}
Let $f$ be a positive solution to \eqref{eq:1}, $u=\operatorname{log}(f)$, $\alpha, \beta ,c, \mu, \eta ,\nu$ be as in Theorem \ref{maintheorem}, and $(x_1,t_1), (x_2,t_2)\in \mathbb{R}^n\times (0,\infty)$ with $t_2> t_1$. If $0< f\leq 1$ we have
\begin{align*}
    \frac{f(x_2,t_2)}{f(x_1,t_1)}\geq   \operatorname{exp}\{-\frac{1}{4(1-\frac{\beta}{\alpha})}\frac{|x_{2}-x_{1}|^{2}}{t_{2}-t_{1}}-\frac{(t_2-t_1)}{4}(\frac{c}{\alpha}-1)_{+}(1-a)^2 -\frac{A(t_{2}-t_{1})}{\alpha}\big\} \nonumber \\ \hspace{17.5mm} \times \big[\frac{1-e^{-2\mu\eta t_{2}}}{1-e^{-2\mu\eta t_{1}}}\big]^{-\frac{1}{2\mu\alpha(\nu+\mu)}}\times \big[ \frac{e^{2\mu\eta t_{2}}-1}{e^{2\mu\eta t_{1}}-1}]^{\frac{1}{2\alpha\mu(\nu-\mu)}}.
\end{align*}
If $f\geq 1$ and $c\geq \alpha $
\begin{align*}
    \frac{f(x_2,t_2)}{f(x_1,t_1)}\geq  \operatorname{exp}\{-\frac{1}{4(1-\frac{\beta}{\alpha})}\frac{|x_{2}-x_{1}|^{2}}{t_{2}-t_{1}} -\frac{A(t_{2}-t_{1})}{\alpha}\big\} \nonumber \\ \hspace{17.5mm} \times \big[\frac{1-e^{-2\mu\eta t_{2}}}{1-e^{-2\mu\eta t_{1}}}\big]^{-\frac{1}{2\mu\alpha(\nu+\mu)}}\times \big[ \frac{e^{2\mu\eta t_{2}}-1}{e^{2\mu\eta t_{1}}-1}]^{\frac{1}{2\alpha\mu(\nu-\mu)}}.
\end{align*}
Here $r_{+}=\max\{r,0\}$.

\end{theorem}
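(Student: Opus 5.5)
We integrate the differential Harnack inequality of Theorem~\ref{maintheorem} along the straight space-time segment joining $(x_1,t_1)$ to $(x_2,t_2)$, in the usual Li--Yau way.

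\textbf{Step 1: rewrite $H\ge 0$ in terms of $u_t$.} Since $f=e^u$ solves \eqref{eq:1}, we have
\[
u_t=\Delta u+|\nabla u|^2-F(u).
\]
Substituting $\Delta u=u_t-|\nabla u|^2+F(u)$ into \eqref{H} and dividing by $\alpha$ gives
\[
u_t\ \ge\ \Bigl(1-\frac{\beta}{\alpha}\Bigr)|\nabla u|^2+\Bigl(\frac{c}{\alpha}-1\Bigr)\bigl(-F(u)\bigr)-\frac{\varphi(t)}{\alpha}.
\]

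\textbf{Step 2: bound the reaction term in each regime.}

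If $0<f\le 1$, then $F(u)=(1-f)(f-a)\ge 0$. Since $-F(u)\le 0$, the term $(\frac{c}{\alpha}-1)(-F)$ is $\ge 0$ when $c\le\alpha$, and $\ge -(\frac{c}{\alpha}-1)F$ otherwise. Maximizing $(1-f)(f-a)$ over $f\in\mathbb R$ gives the bound $F\le \frac{(1-a)^2}{4}$, attained at $f=\frac{1+a}{2}$. Hence the term is always $\ge -\frac14(\frac{c}{\alpha}-1)_+(1-a)^2$.

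If $f\ge 1$, then $f\ge 1>a$, so $F\le 0$. Together with $c\ge\alpha$, this makes $(\frac{c}{\alpha}-1)(-F)\ge 0$, and the term can simply be dropped.

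\textbf{Step 3: integrate along the path.} Set $\gamma(s)=\bigl((1-s)x_1+sx_2,\ (1-s)t_1+st_2\bigr)$. Then
\[
\log\frac{f(x_2,t_2)}{f(x_1,t_1)}=\int_0^1\Bigl[\langle\nabla u,x_2-x_1\rangle+(t_2-t_1)u_t\Bigr]ds.
\]
Insert the lower bound for $u_t$ and complete the square:
\[
(t_2-t_1)\Bigl(1-\frac{\beta}{\alpha}\Bigr)|\nabla u|^2+\langle\nabla u,x_2-x_1\rangle\ \ge\ -\frac{|x_2-x_1|^2}{4(1-\frac{\beta}{\alpha})(t_2-t_1)}.
\]
This produces the Gaussian factor. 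The constant reaction bound contributes $-\frac{t_2-t_1}{4}(\frac{c}{\alpha}-1)_+(1-a)^2$ in the first case and nothing in the second. The time term contributes $-\frac1\alpha\int_{t_1}^{t_2}\varphi(t)\,dt$.

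\textbf{Step 4: integrate $\varphi$ explicitly.} This is the only nontrivial computation. Write $\varphi=A+\varphi_0$ with
\[
\varphi_0(t)=\frac{\eta}{1-e^{2\eta\mu t}}\Bigl(\frac{e^{2\mu\eta t}}{\nu-\mu}-\frac{1}{\nu+\mu}\Bigr).
\]
Split it into two pieces:
\[
\frac{\eta e^{2\mu\eta t}}{(\nu-\mu)(1-e^{2\mu\eta t})}=-\frac{1}{2\mu(\nu-\mu)}\frac{d}{dt}\log\bigl(e^{2\mu\eta t}-1\bigr),
\]
\[
-\frac{\eta}{(\nu+\mu)(1-e^{2\mu\eta t})}=\frac{\eta e^{-2\mu\eta t}}{(\nu+\mu)(1-e^{-2\mu\eta t})}=\frac{1}{2\mu(\nu+\mu)}\frac{d}{dt}\log\bigl(1-e^{-2\mu\eta t}\bigr).
\]
The second identity uses $e^{2\mu\eta t}-1>0$ for $t>0$. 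Integrating from $t_1$ to $t_2$, multiplying by $-\frac1\alpha$ and exponentiating gives exactly the two bracketed factors in the statement, with exponents $-\frac{1}{2\mu\alpha(\nu+\mu)}$ and $+\frac{1}{2\alpha\mu(\nu-\mu)}$. The constant $A$ gives $-\frac{A(t_2-t_1)}{\alpha}$.

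All integrals are finite because $t_1>0$. We need $\nu\neq\mu$, which holds by the definitions of the constants in Theorem~\ref{maintheorem}. The main care point is the sign bookkeeping in Step 4; the rest is routine.
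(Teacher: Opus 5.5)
Your proposal is correct and follows essentially the same argument as the paper: integrate $H\ge 0$ along the straight space-time segment, complete the square in $\nabla u$ to get the Gaussian term, bound $(1-\frac{c}{\alpha})F(u)$ in the two regimes, and integrate $\varphi$ in closed form exactly as in your Step 4. One slip to fix: \eqref{eq:1} gives $u_t=\Delta u+|\nabla u|^2+F(u)$, since $-f(1-f)(a-f)=fF(u)$, so $\Delta u=u_t-|\nabla u|^2-F(u)$; your displayed lower bound for $u_t$ is the one that follows from this correct sign (your stated sign would instead give $-(1+\frac{c}{\alpha})F$), so nothing downstream changes.
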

\begin{remark}
    A standard maximum principle argument shows that if $f$ satisfies either $0< f\leq 1$ or $f\geq 1$ at $t=0$, then it remains so for all time.
\end{remark}

We also apply our estimate to traveling wave solutions of
\eqref{eq:1}. In particular, a positive solution $f>0$ to \eqref{eq:1} is called a traveling wave solution if $f(x,t)=v(x_1,...,x_{n-1},x_n+st)$ for some function $v:\mathbb{R}^n\to \mathbb{R}$ and $s\in \mathbb{R}$. Rewriting the Harnack inequality in terms of the original
solution $f$, we derive constraints on the wave speed $s$. The differential Harnack estimate then provides a lower
bound for the wave speed under suitable assumptions on the limiting behavior
of the traveling wave.

\begin{theorem}\label{thm:app_2}
    For $n\geq 2$, let $f(x,t)=v(x_1,...,x_{n-1},x_n+st)>0$ be a traveling wave solution of \eqref{eq:1} and suppose that $\lim_{t\to\infty} f(x, t)=0$ for some $x\in \mathbb{R}^n$. Let $\alpha,\beta,c,\mu,\eta,\nu$, and $\varphi$, as in Theorem \ref{maintheorem}, then we have
    \begin{align}\label{eq:speed_bound}
        s^2\geq \frac{4(\alpha-\beta)}{\alpha^2}((c-\alpha)a-\varphi_\infty),
    \end{align}
    where
\[
\gamma=\frac{2(\alpha-\beta)}{n\alpha^2}
      =\mu^2-\nu^2
\]
and
\[
\varphi_\infty
:=\lim_{t\to\infty}\varphi(t)
=A+\frac{\eta(\nu+\mu)}{\gamma}.
\]  In particular, whenever
    \begin{align}\label{eq:speend_bound2}
        (c-\alpha)a>\varphi_{\infty},
    \end{align}
    \eqref{eq:speed_bound} gives a non-trivial lower bound for $s$.
\end{theorem}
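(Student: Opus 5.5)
The plan is to substitute the traveling wave ansatz into the Harnack quantity of Theorem \ref{maintheorem}, use the equation to eliminate $\Delta u$, and then let $t\to\infty$ at the point where $f\to 0$.

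\textbf{Rewriting $H$.} Since $f=e^u$, we have $\Delta f/f=\Delta u+|\nabla u|^2$. Dividing \eqref{eq:1} by $f$ gives
\[
u_t=\Delta u+|\nabla u|^2-(1-f)(a-f)=\Delta u+|\nabla u|^2+F(u).
\]
Hence $\Delta u=u_t-|\nabla u|^2-F(u)$. Substituting into \eqref{H}, the inequality $H\ge 0$ becomes
\[
\alpha u_t-(\alpha-\beta)|\nabla u|^2+(c-\alpha)F(u)+\varphi(t)\ge 0 .
\]

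\textbf{Using the traveling wave structure.} For $f(x,t)=v(x_1,\dots,x_{n-1},x_n+st)$ we have $u_t=s\,\partial_{x_n}u$. Since $\alpha-\beta>0$, we may drop the tangential part of the gradient via $|\nabla u|^2\ge (\partial_{x_n}u)^2$. This leaves a concave quadratic in $p=\partial_{x_n}u$, bounded above by its maximum:
\[
\alpha s p-(\alpha-\beta)p^2\le \frac{\alpha^2s^2}{4(\alpha-\beta)} .
\]
Therefore, at every $(x,t)$ with $t>0$,
\[
\frac{\alpha^2s^2}{4(\alpha-\beta)}+(c-\alpha)(1-f)(f-a)+\varphi(t)\ge 0 .
\]

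\textbf{Passing to the limit.} Fix the point $x$ for which $f(x,t)\to 0$. Then $(1-f)(f-a)\to -a$ as $t\to\infty$.

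For $\varphi$, note that $e^{2\mu\eta t}/(1-e^{2\mu\eta t})\to -1$ and $1/(1-e^{2\mu\eta t})\to 0$. So $\varphi(t)\to A-\eta/(\nu-\mu)$. Using $\gamma=\mu^2-\nu^2$, this equals $A+\eta(\nu+\mu)/\gamma=\varphi_\infty$. Here we use $\mu\neq\nu$ and $\eta\mu>0$, which are taken from the construction of the constants in Section \ref{sec:main_proof}. The hypothesis $n\ge 2$ enters only through the nonemptiness of the admissible range for $c$ and the definitions of those constants.

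Letting $t\to\infty$ gives $\frac{\alpha^2s^2}{4(\alpha-\beta)}\ge (c-\alpha)a-\varphi_\infty$, which is \eqref{eq:speed_bound}. Under \eqref{eq:speend_bound2} the right-hand side is positive, so the bound is nontrivial.

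\textbf{Main obstacle.} There is no real analytic obstacle. The only care needed is to check the identification of $\varphi_\infty$ with the constants defined in the proof of Theorem \ref{maintheorem}. In particular, the sign conventions there must make $-1/(\nu-\mu)=(\nu+\mu)/\gamma$ valid, which requires $\mu\ne\nu$ and $\gamma=\mu^2-\nu^2$.
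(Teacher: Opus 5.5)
Your proposal is correct and follows the paper's argument. You eliminate $\Delta u$ via the equation, use $u_t=s\,\partial_{x_n}u$ together with $|\nabla u|^2\ge(\partial_{x_n}u)^2$, complete the square, and let $t\to\infty$ at the point where $f\to0$; the only differences are cosmetic (you work with $u$ and $p=\partial_{x_n}u$ rather than with $f$ and $|\nabla v|/v$, and you explicitly check the value of $\varphi_\infty$, which the paper only states).
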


\begin{remark}
    For $n\geq 2$, one can verify that there exist choices of $\alpha, \beta, $ and $c$ satisfying both the hypotheses of Theorem \ref{maintheorem} and the condition \eqref{eq:speend_bound2}. Therefore, the lower bound in Theorem~\ref{thm:app_2} is
nontrivial for suitable choices of parameters.
\end{remark}


The remainder of the paper is organized as follows. In Section \ref{sec:tech}, we prove several technical lemmas that will be needed in the proof of Theorem \ref{maintheorem}. In Section \ref{sec:main_proof}, we prove Theorem \ref{maintheorem}. In Section \ref{sec:applications}, we apply Theorem  \ref{maintheorem} to prove Theorem \ref{thm:app} and Theorem \ref{thm:app_2}.



\section{Technical Lemmas}\label{sec:tech}

In this section, we prove some technical lemmas that will be needed throughout the proof of Theorem \ref{maintheorem}. 
For notational convenience, we introduce the box operator $\sq g(x,t) := g_t - \Delta g$. 

First, we calculate the evolution of the Harnack quantity. 
\begin{lemma} \label{lem:computations}
Let $f > 0$ be a solution to \eqref{eq:1}, $u = \log f$, and
\begin{align*}
    F(u) &= (1 - e^u)(e^u - a).
\end{align*}
For $\alpha, \beta, c\in \mathbb{R}$ and $\varphi(x,t)\in C^\infty(\mathbb{R}^n\times\mathbb {R}_+)$, set
\begin{equation}\label{H}
H =\alpha \Delta u + \beta |\nabla u|^2 + c F(u) + \varphi.
\end{equation}
Then
\begin{align}
\sq H =& 2\nabla u \cdot \nabla H + 2(\alpha-\beta) |\nabla \nabla u|^2 + \varphi_t - \Delta \varphi-2\nabla u \cdot \nabla \varphi  \\
& + \alpha (1 + a - 2e^u) e^u \cdot (\Delta u) + e^u \cdot |\nabla u|^2 (1 + a)(\alpha + 2 \beta - 2c) \nonumber \\
& + e^u \cdot |\nabla u|^2 (- 2 e^u)(2 \alpha + 2 \beta - 3c) + c(1+a-2e^u) e^u (1-e^u)(e^u - a) \nonumber \\ 
=& 2\nabla u \cdot \nabla H + 2(\alpha-\beta) |\nabla \nabla u|^2 + \varphi_t - \Delta \varphi-2\nabla u \cdot \nabla \varphi 
\\
& + (1+a - 2e^u) e^u \cdot (H-\beta |\nabla u|^{2}-cF-\varphi) + e^u \cdot |\nabla u|^2 (1 + a)(\alpha + 2 \beta - 2c) \nonumber \\
& + e^u \cdot |\nabla u|^2 (- 2 e^u)(2 \alpha + 2 \beta - 3c) + c(1+a-2e^u) e^u (1-e^u)(e^u - a) \nonumber. 
\end{align}
\end{lemma}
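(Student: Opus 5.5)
We will carry out a direct computation, starting from the evolution of $u=\log f$. Dividing \eqref{eq:1} by $f$ and using $\Delta f/f=\Delta u+|\nabla u|^2$ gives
\[
\sq u = u_t-\Delta u = |\nabla u|^2 - (1-e^u)(a-e^u) = |\nabla u|^2 + F(u).
\]
Write $G(u)=|\nabla u|^2+F(u)$. Two derivative identities for $F$ will be used throughout:
\[
F'(u)=e^u(1+a-2e^u), \qquad F''(u)=e^u(1+a-4e^u).
\]
These come from expanding $F=-e^{2u}+(1+a)e^u-a$.

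We then compute $\sq$ of each of the three pieces of $H$ separately.

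\textbf{Laplacian term.} Commuting $\Delta$ with $\partial_t$ gives $\sq(\Delta u)=\Delta(\sq u)$. Bochner's formula on $\bR^n$ gives $\Delta|\nabla u|^2=2|\nabla\nabla u|^2+2\nabla u\cdot\nabla\Delta u$, and hence
\[
\sq\Delta u = 2|\nabla\nabla u|^2+2\nabla u\cdot\nabla\Delta u+F'\Delta u+F''|\nabla u|^2 .
\]

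\textbf{Gradient term.} Using $\partial_t|\nabla u|^2=2\nabla u\cdot\nabla u_t$ and Bochner again,
\[
\sq|\nabla u|^2=-2|\nabla\nabla u|^2+2\nabla u\cdot\nabla|\nabla u|^2+2F'|\nabla u|^2 .
\]

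\textbf{Reaction term.} By the chain rule, $\sq F(u)=F'\sq u-F''|\nabla u|^2=F'(|\nabla u|^2+F)-F''|\nabla u|^2$.

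Next we assemble $\sq H$ as $\alpha(\cdot)+\beta(\cdot)+c(\cdot)+\varphi_t-\Delta\varphi$. The main point is to regroup the first-order transport terms as $2\nabla u\cdot\nabla H$. This works because
\[
2\nabla u\cdot\nabla H=2\alpha\nabla u\cdot\nabla\Delta u+2\beta\nabla u\cdot\nabla|\nabla u|^2+2cF'|\nabla u|^2+2\nabla u\cdot\nabla\varphi .
\]
The first two terms match what the Laplacian and gradient computations produced. The last two do not appear in $\sq H$, so they must be subtracted back. The Hessian terms combine to $2(\alpha-\beta)|\nabla\nabla u|^2$.

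After this regrouping, the remaining coefficient of $|\nabla u|^2$ is
\[
(\alpha-c)F''+(2\beta-c)F' .
\]
Substituting the formulas for $F'$ and $F''$, this becomes
\[
e^u\Big[(1+a)(\alpha+2\beta-2c)-2e^u(2\alpha+2\beta-3c)\Big],
\]
which is exactly the stated expression. The remaining zeroth-order terms are $\alpha F'\Delta u+cF'F$, which match the terms $\alpha(1+a-2e^u)e^u\Delta u$ and $c(1+a-2e^u)e^u(1-e^u)(e^u-a)$ in the statement.

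The second form of the identity follows immediately. One substitutes $\alpha\Delta u=H-\beta|\nabla u|^2-cF-\varphi$ into the term $\alpha F'\Delta u$.

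The main obstacle is purely bookkeeping: tracking the signs of $F'$ and $F''$ correctly, and making sure the $cF'|\nabla u|^2$ contributions from $\sq F$ and from $2\nabla u\cdot\nabla H$ combine correctly. We will check the coefficients $(\alpha+2\beta-2c)$ and $(2\alpha+2\beta-3c)$ by matching the $e^u$ and $e^{2u}$ terms separately.
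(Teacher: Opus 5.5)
Your proposal is correct and follows essentially the same route as the paper: compute $\sq$ of $\Delta u$, $|\nabla u|^2$ and $F(u)$ from $u_t=\Delta u+|\nabla u|^2+F(u)$, absorb the transport terms into $2\nabla u\cdot\nabla H$ (subtracting back $2cF'|\nabla u|^2+2\nabla u\cdot\nabla\varphi$), and collect coefficients. Writing the reaction terms via $F'=e^u(1+a-2e^u)$ and $F''=e^u(1+a-4e^u)$, so that the $|\nabla u|^2$-coefficient is $(\alpha-c)F''+(2\beta-c)F'$, is just tidier bookkeeping of the paper's explicit exponential computation, and it gives the stated coefficients $(\alpha+2\beta-2c)$ and $(2\alpha+2\beta-3c)$.
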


\begin{proof}
Note that we have
\begin{align*}
    \partial_{t} u=\frac{\partial_{t}f}{f}&=\frac{\Delta f}{f}-a+(1+a)f-f^{2}\\&= \Delta u+|\nabla u|^{2}-a+(1+a)e^{u}-e^{2u}.
\end{align*}
Using this, we get that:
\begin{align*}
   \Box \Delta u&= \Delta (\partial_{t}u)-\Delta\Delta u\\&= \Delta( \Delta u+|\nabla u|^{2}-a+(1+a)e^{u}-e^{2u})-\Delta\Delta u\\&= 2\Delta\nabla u\nabla u+2|\nabla\nabla u|^{2}+(1+a)e^{u}(|\nabla u|^{2}+\Delta u)-4e^{2u}|\nabla u|^{2}-2e^{2u}\Delta u\\&=2\nabla\Delta u\nabla u+2|\nabla\nabla u|^{2}+(1+a)e^{u}(|\nabla u|^{2}+\Delta u)-4e^{2u}|\nabla u|^{2}-2e^{2u}\Delta u
\end{align*}
\begin{align*}
    \Box |\nabla u|^{2}&=2\nabla(\partial_{t}u)\nabla u-\Delta|\nabla u|^{2}\\&=2\nabla(\Delta u+|\nabla u|^{2}-a+(1+a)e^{u}-e^{2u})\nabla u-2\Delta \nabla u.\nabla u-2|\nabla\nabla u|^{2}\\&=2\nabla(|\nabla u|^{2}-a+(1+a)e^{u}-e^{2u})\nabla u-2|\nabla\nabla u|^{2}\\&=2(\nabla|\nabla u|^{2}.\nabla u+(1+a)e^{u}|\nabla u|^{2}-2e^{2u}|\nabla u|^{2}-|\nabla \nabla u|^{2})
\end{align*}
\begin{align*}
    \Box F&= (1+a-2e^{u})e^{u}\partial_{t}u-\nabla (1+a-2e^{u})e^{u}\nabla u\\&=(1+a-2e^{u})e^{u}(\partial_{t}u-\Delta u)+4e^{2u}|\nabla u|^{2}-(1+a)e^{u}|\nabla u |^{2}\\&= (1+a-2e^{u})e^{u}(|\nabla u|^{2}+F(u))+4e^{2u}|\nabla u|^{2}-(1+a)e^{u}|\nabla u |^{2}.
\end{align*}
Combining everything we compute:
\begin{align*}
    \Box H&= 2\nabla H.\nabla u-2c\nabla F.\nabla u-2\nabla \varphi.\nabla u+2(\alpha-\beta)|\nabla \nabla u|^{2}+\Box\varphi\\& +c(1+a-2e^{u})e^{u}F(u)+(1+a)e^{u}(\alpha +2\beta)|\nabla u|^{2}+e^{2u} (2c-4\alpha-4\beta)|\nabla u|^{2}\\&+\alpha(1+a-2e^{u})e^{u}\Delta u\\[3mm]&=2\nabla H.\nabla u-2c(1+a-2e^{u})e^{u}|\nabla u|^{2}-2\nabla \varphi.\nabla u+2(\alpha-\beta)|\nabla \nabla u|^{2}+\Box\varphi\\&+c(1+a-2e^{u})e^{u}F(u) +(1+a)e^{u}(\alpha +2\beta)|\nabla u|^{2}+e^{2u} (2c-4\alpha-4\beta)|\nabla u|^{2}\\&+\alpha(1+a-2e^{u})e^{u}\Delta u\\[3mm]&=2\nabla H.\nabla u+2(\alpha-\beta)|\nabla \nabla u|^{2}-2\nabla \varphi.\nabla u+\Box\varphi +(1+a)e^{u}(\alpha +2\beta-2c)|\nabla u|^{2}\\&+e^{2u} (6c-4\alpha-4\beta)|\nabla u|^{2}+\alpha(1+a-2e^{u})e^{u}\Delta u+c(1+a-2e^{u})e^{u}F(u).
\end{align*}



\end{proof}

The proof of Theorem \ref{maintheorem} involves first restricting to a rectangle and modifying $\varphi$ by an appropriately chosen barrier function. In the following lemma, we establish some properties of the barrier function that will be needed in the proof.

\begin{lemma}[Spatial Barrier Positivity]\label{lem:spacial_pos}
Fix $\beta,\gamma>0$. There exists $b_0(\beta,\gamma)>0$ such that the following holds for all $b>b_0$. Let
\[
 R \;=\; \prod_{i=1}^{n}[p_{i},q_{i}]\subseteq \mathbb{R}^n,
 \]
be a rectangle, and define 
\begin{equation}\label{eq:phi-def} \phi_R(x)=\sum_{i=1}^{n} \left[ \frac{b}{(x_i - p_i)^2} + \frac{b}{(q_i - x_i)^2} \right].
\end{equation}
Then 
\begin{equation}\label{eq:barrier-positive}
\gamma\,\phi_R^2 - \Delta\phi_R - \frac{|\nabla \phi_R|^2}{2\gamma\beta\,\phi_R} > 0.
\end{equation}
\end{lemma}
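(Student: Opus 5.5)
Everything reduces to one-variable estimates for the terms $g(s)=b/s^2$, where $s$ is a distance to a face. Write $d_i^-=x_i-p_i$ and $d_i^+=q_i-x_i$, both positive in the interior of $R$, where the inequality is to be understood. Then $\phi_R=\sum_{i}\left(b(d_i^-)^{-2}+b(d_i^+)^{-2}\right)$. Differentiating gives
\[
\partial_i\phi_R=-\frac{2b}{(d_i^-)^3}+\frac{2b}{(d_i^+)^3},\qquad
\partial_i^2\phi_R=\frac{6b}{(d_i^-)^4}+\frac{6b}{(d_i^+)^4}.
\]
We compare each piece with $\phi_R^2$. Set $w_j=b/d_j^2$, indexing all $2n$ terms by $j$, so that $\phi_R=\sum_j w_j$. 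Then
\[
\frac{6b}{d_j^4}=\frac{6}{b}\,w_j^2
\quad\text{and hence}\quad
\Delta\phi_R=\frac{6}{b}\sum_j w_j^2\le \frac{6}{b}\,\phi_R^2,
\]
using that all $w_j>0$.

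For the gradient term, $(2b/d^3)^2=4b^2/d^6=\frac{4}{b}w^3$. By $(x-y)^2\le x^2+y^2$ for $x,y\ge0$,
\[
|\nabla\phi_R|^2\le\sum_j\frac{4b^2}{d_j^6}=\frac{4}{b}\sum_j w_j^3\le\frac4b\,\phi_R^3 .
\]
Therefore
\[
\frac{|\nabla\phi_R|^2}{2\gamma\beta\phi_R}\le\frac{2}{\gamma\beta b}\,\phi_R^2 .
\]
Combining the two bounds,
\[
\gamma\phi_R^2-\Delta\phi_R-\frac{|\nabla\phi_R|^2}{2\gamma\beta\phi_R}\ \ge\ \Big(\gamma-\frac6b-\frac{2}{\gamma\beta b}\Big)\phi_R^2 .
\]
The right-hand side is strictly positive as soon as $b>b_0:=\frac{6}{\gamma}+\frac{2}{\gamma^2\beta}$, since $\phi_R>0$ in the interior. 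This $b_0$ depends only on $\beta$ and $\gamma$, and not on $n$ or on the rectangle, as the statement requires.

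There is no real obstacle here. The only care needed is to make the homogeneity work: $\Delta\phi_R$ scales like $\phi_R^2/b$ and $|\nabla\phi_R|^2/\phi_R$ also scales like $\phi_R^2/b$, so the extra factor of $b$ in $\gamma\phi_R^2$ is what absorbs both. The cross terms in $|\nabla\phi_R|^2$ are handled crudely, by dropping the negative product $-2\cdot\frac{2b}{(d_i^-)^3}\cdot\frac{2b}{(d_i^+)^3}$ and then using $\sum_j w_j^3\le(\sum_j w_j)^3$ for nonnegative $w_j$. This costs nothing in the conclusion.
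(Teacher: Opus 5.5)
Your argument is correct, and it proceeds differently from the paper's. The paper works asymptotically. Near a face $x_1\to p_1^+$ it keeps only the dominant term $b/(x_1-p_1)^2$, which yields the threshold $b_0=(6+\frac{2}{\gamma\beta})/\gamma$. It then asserts that away from $\partial R$ the inequality holds ``for $b$ sufficiently large,'' because $\phi^2$ is of order $b^2$ while $\Delta\phi$ and $|\nabla\phi|^2/\phi$ are of order $b$.

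You instead use the exact homogeneity of the barrier to get a global pointwise bound. With $w_j=b/d_j^2$ one has $\Delta\phi_R=\frac{6}{b}\sum_j w_j^2\le\frac{6}{b}\phi_R^2$. After discarding the nonpositive cross terms, $|\nabla\phi_R|^2\le\frac{4}{b}\sum_j w_j^3\le\frac{4}{b}\phi_R^3$. This gives exactly the same $b_0$, valid at every interior point of every rectangle at once. So the independence of $b_0$ from $R$ is immediate; the remark after the lemma and the exhaustion by the boxes $R_k$ in the proof of the main theorem rely on it.

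The paper's interior step, taken literally, leaves uncontrolled how the threshold depends on the size of $R$ and on the distance to $\partial R$. It also never checks that the interior threshold stays below the face threshold. Your estimate settles both points.

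What the paper's face asymptotics add in return is sharpness. For $b<b_0$ the expression behaves like $(\gamma b^2-6b-\frac{2b}{\gamma\beta})(x_1-p_1)^{-4}<0$ near a face, so no better constant is available from this barrier. Your crude treatment of the cross terms therefore indeed costs nothing.
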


\begin{proof}For simplicity, we omit the $R$ in $\phi_R(x)$ and simply denote it as $\phi(x)$.
Differentiating \eqref{eq:phi-def} with respect to $x_i$ gives
\[
\partial_{x_i} \phi(x) 
= -\frac{2b}{(x_i - p_i)^3} + \frac{2b}{(q_i - x_i)^3},
\]
and hence
\[
|\nabla\phi|^2 
= \sum_{i=1}^n \left[ -\frac{2b}{(x_i - p_i)^3} + \frac{2b}{(q_i - x_i)^3} \right]^2.
\]
Similarly, the second derivatives are
\[
\partial_{x_i}^2\phi(x) 
= \frac{6b}{(x_i - p_i)^4} + \frac{6b}{(q_i - x_i)^4},
\]
and therefore
\[
\Delta\phi = \sum_{i=1}^n \left[ \frac{6b}{(x_i - p_i)^4} + \frac{6b}{(q_i - x_i)^4} \right].
\]

Near a boundary face (say $x_1 \to p_1^+$), the term $\frac{b}{(x_1 - p_1)^2}$ dominates $\phi$, so
\[
\phi(x) \;\approx\; \frac{b}{(x_1 - p_1)^2}, 
\quad \phi^2 \;\approx\; \frac{b^2}{(x_1 - p_1)^4}.
\]
In this regime:
\begin{align*}
-\Delta\phi 
&\approx -\frac{6b}{(x_1 - p_1)^4}, \\
-\frac{|\nabla\phi|^2}{2\gamma\beta\,\phi} 
&\approx -\frac{\frac{4b^2}{(x_1 - p_1)^6}}{2\gamma\beta\,\frac{b}{(x_1 - p_1)^2}} 
= -\frac{2b}{\gamma\beta\,(x_1 - p_1)^4}, \\
\gamma\,\phi^2 
&\approx \frac{\gamma\,b^2}{(x_1 - p_1)^4}.
\end{align*}

Combining the three contributions near the boundary, we find:
\[
\gamma\,\phi^2 - \Delta\phi - \frac{|\nabla\phi|^2}{2\gamma\beta\,\phi}
\;\approx\; \frac{\gamma\,b^2 - 6b - \frac{2b}{\gamma\beta}}{(x_1 - p_1)^4}.
\]
Thus positivity is equivalent to
\[
\gamma b^2 - b\left(6 + \frac{2}{\gamma\beta}\right) > 0.
\]
Since $\gamma>0$, this is achieved whenever
\[
b > b_0 := \frac{6 + \frac{2}{\gamma\beta}}{\gamma}.
\]
For such $b$, \eqref{eq:barrier-positive} holds near each boundary face by symmetry of $\phi$.

Away from the boundary, $\phi$ is bounded below by a positive constant of order $b$, so $\phi^2$ is $\mathcal{O}(b^2)$ while $\frac{|\nabla\phi|^2}{\phi}$ and $\Delta\phi$ are at most $\mathcal{O}(b)$. Therefore \eqref{eq:barrier-positive} is automatic in the interior for $b$ sufficiently large.

We conclude that for all $b >b_0$, inequality \eqref{eq:barrier-positive} holds throughout $R$.
\end{proof}

\begin{remark}
Note that the choice of $b_0$ does not depend on the rectangle $R$. This allows one to pass to the limit as $R$ exhausts $\mathbb{R}^n$, retaining the above inequality.
\end{remark}

The following lemma establishes the properties of $\varphi$ needed in Section \ref{sec:main_proof}; closely related constructions appear in \cite{cck15, Cao_2017, bbc19}.

\begin{lemma}\label{lem:computations_2}
Let $\mu>\nu> 0$, $\eta> 0$, and set
\[
\gamma=\mu^{2}-\nu^{2}>0.
\]

Define
\[
\xi(t)
=
\frac{\eta}{1-e^{2\mu\eta t}}
\left(
\frac{e^{2\mu\eta t}}{\nu-\mu}
-\frac{1}{\nu+\mu}
\right).
\]
Then $\xi(t)>0$ for all $t>0$,
\[
\lim_{t\to0^{+}}\xi(t)=+\infty,
\]
and
\[
\xi_t+(\mu\xi)^2-(\eta+\nu\xi)^2=0.
\]

Moreover, suppose that $A>0$ satisfies
\[
\gamma A=\eta\nu,
\]
and define
\[
\varphi(t)=\xi(t)+A.
\]
Then
\[
\lim_{t\to0^{+}}\varphi(t)=+\infty,
\]
and
\[
\varphi_t+(\mu\varphi)^2-(\eta+\nu\varphi)^2>0
\]
for every $t>0$.

\end{lemma}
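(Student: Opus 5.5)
The approach is to recognise $\xi$ as the explicit solution of a constant-coefficient Riccati equation, and to check every claim by writing $\xi$ in terms of the two equilibria of that equation. Set $s=s(t)=e^{2\mu\eta t}$, so $s>1$ for $t>0$, $s\to1^+$ as $t\to0^+$, and $s_t=2\mu\eta s$. Expanding,
\[
(\mu\xi)^2-(\eta+\nu\xi)^2=\gamma\xi^2-2\eta\nu\xi-\eta^2 .
\]
Its roots are
\[
r_+=\frac{\eta(\nu+\mu)}{\gamma}=\frac{\eta}{\mu-\nu}>0,\qquad r_-=\frac{\eta(\nu-\mu)}{\gamma}=-\frac{\eta}{\mu+\nu}<0,
\]
so that $\gamma\xi^2-2\eta\nu\xi-\eta^2=\gamma(\xi-r_+)(\xi-r_-)$. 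Moreover $\gamma(r_+-r_-)=2\mu\eta$. From the definition, $\xi=\frac{-s r_+ + r_-}{1-s}=\frac{s r_+-r_-}{s-1}$. Hence
\[
\xi-r_+=\frac{r_+-r_-}{s-1},\qquad \xi-r_-=\frac{s(r_+-r_-)}{s-1}.
\]

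\textbf{Positivity and the limit at $t=0$.} Since $r_+-r_->0$ and $s>1$, the first identity gives $\xi>r_+>0$ for all $t>0$. It also gives $\xi\to+\infty$ as $s\to1^+$, i.e. as $t\to0^+$.

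\textbf{The Riccati equation.} Differentiating $\xi=r_++\frac{r_+-r_-}{s-1}$ gives
\[
\xi_t=-\frac{(r_+-r_-)\,2\mu\eta s}{(s-1)^2}=-\frac{\gamma(r_+-r_-)^2 s}{(s-1)^2}.
\]
The two identities above give $\gamma(\xi-r_+)(\xi-r_-)=\frac{\gamma s(r_+-r_-)^2}{(s-1)^2}$. So $\xi_t+\gamma(\xi-r_+)(\xi-r_-)=0$, which is the stated equation.

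\textbf{The shifted function $\varphi$.} Put $\varphi=\xi+A$. The limit $\varphi\to+\infty$ is immediate. Expanding $\gamma(\xi+A)^2-2\eta\nu(\xi+A)-\eta^2$ and subtracting the Riccati identity for $\xi$ leaves
\[
\varphi_t+(\mu\varphi)^2-(\eta+\nu\varphi)^2=2\gamma A\xi-2\eta\nu\xi+\gamma A^2-2\eta\nu A .
\]
Wait — the $\xi$-terms must be tracked carefully: the term $-2\eta\nu\xi$ is already absorbed in the Riccati identity for $\xi$. The genuinely new terms are therefore $2\gamma A\xi+\gamma A^2-2\eta\nu A$. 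Using $\gamma A=\eta\nu$, this equals
\[
2\gamma A\xi-\gamma A^2=\gamma A(2\xi-A).
\]
It remains to show $\xi>A/2$. We have $A=\frac{\eta\nu}{\gamma}=\frac{\eta\nu}{(\mu-\nu)(\mu+\nu)}<\frac{\eta}{\mu-\nu}=r_+$, because $\nu<\mu+\nu$. Combined with $\xi>r_+$, this gives $\xi>A>A/2$, hence the expression is strictly positive for every $t>0$.

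The only delicate point is this bookkeeping in the last step: one must correctly identify the residual $\gamma A(2\xi-A)$. The positivity then rests on the lower bound $\xi>r_+$, which is exactly where the form $\xi=r_++\frac{r_+-r_-}{s-1}$ is used. Everything else is direct algebra.
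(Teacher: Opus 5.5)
Your proof is correct and follows essentially the paper's route: your form $\xi=r_++\frac{r_+-r_-}{s-1}$ is the paper's $\frac{\eta}{\gamma}\left(\mu\coth(\mu\eta t)+\nu\right)$ rewritten using $\coth(\mu\eta t)=1+\frac{2}{s-1}$, and your residual $\gamma A(2\xi-A)$ is the paper's $A(2\gamma\xi-\eta\nu)$ after substituting $\eta\nu=\gamma A$. In a final write-up, delete the first displayed residual containing the spurious $-2\eta\nu\xi$ term instead of correcting it in prose.
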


\begin{proof}
Since $\eta>0$ and
\[
\gamma=\mu^{2}-\nu^{2}=(\mu-\nu)(\mu+\nu),
\]
we may rewrite $\xi$ as
\begin{align*}
\xi(t)
&=
\frac{\eta}{1-e^{2\mu\eta t}}
\left(
\frac{e^{2\mu\eta t}}{\nu-\mu}
-\frac{1}{\nu+\mu}
\right)\\
&=
\frac{\eta}{\mu^{2}-\nu^{2}}
\left(
\mu\frac{e^{2\mu\eta t}+1}{e^{2\mu\eta t}-1}
+\nu
\right)\\
&=
\frac{\eta}{\gamma}
\left(
\mu\coth(\mu\eta t)+\nu
\right).
\end{align*}
Because $\coth(\mu\eta t)>1$ for $t>0$, it follows that
\[
\xi(t)>0.
\]
Moreover, since $\coth z\to+\infty$ as $z\to0^{+}$,
\[
\lim_{t\to0^{+}}\xi(t)=+\infty.
\]

Differentiating gives
\[
\xi_t
=
\frac{\mu^{2}\eta^{2}}{\gamma}
(1-\operatorname{coth}^2(\mu\eta t)).
\]
On the other hand,
\[
\mu\xi
=
\frac{\mu\eta}{\gamma}
\left(
\mu\coth(\mu\eta t)+\nu
\right),
\]
while
\begin{align*}
\eta+\nu\xi
&=
\eta+
\frac{\nu\eta}{\gamma}
\left(
\mu\coth(\mu\eta t)+\nu
\right)\\
&= 
\frac{\mu\eta}{\gamma}
\left(
\mu+\nu\coth(\mu\eta t)
\right).
\end{align*}
Consequently,
\begin{align*}
(\mu\xi)^2-(\eta+\nu\xi)^2
&=
\frac{\mu^{2}\eta^{2}}{\gamma^{2}}
\left[
\left(\mu\coth(\mu\eta t)+\nu\right)^2
-
\left(\mu+\nu\coth(\mu\eta t)\right)^2
\right]\\
&=
\frac{\mu^{2}\eta^{2}}{\gamma^{2}}
(\mu^{2}-\nu^{2})
\left(\coth^{2}(\mu\eta t)-1\right)\\
&=
\frac{\mu^{2}\eta^{2}}{\gamma}
(\operatorname{coth}^2(\mu\eta t)-1).
\end{align*}
Therefore
\[
\xi_t+(\mu\xi)^2-(\eta+\nu\xi)^2=0.
\]

Now define
\[
\varphi=\xi+A.
\]
Using the equation satisfied by $\xi$, we obtain
\begin{align*}
&\varphi_t+(\mu\varphi)^2-(\eta+\nu\varphi)^2\\
&=
\xi_t+\mu^{2}(\xi+A)^2
-\bigl(\eta+\nu(\xi+A)\bigr)^2\\
&=
\left[
\xi_t+\mu^{2}\xi^2-(\eta+\nu\xi)^2
\right]
+
A\left(
2(\mu^{2}-\nu^{2})\xi
+(\mu^{2}-\nu^{2})A
-2\eta\nu
\right)\\
&=
A\left(2\gamma\xi+\gamma A-2\eta\nu\right).
\end{align*}
Using
\[
\gamma A=\eta\nu,
\]
this becomes
\[
\varphi_t+(\mu\varphi)^2-(\eta+\nu\varphi)^2
=
A(2\gamma\xi-\eta\nu).
\]
From the explicit expression for $\xi$,
\[
\gamma\xi
=
\eta\left(\mu\coth(\mu\eta t)+\nu\right),
\]
and hence
\begin{align*}
2\gamma\xi-\eta\nu
&=
\eta\left(
2\mu\coth(\mu\eta t)+\nu
\right)>0.
\end{align*}
Thus
\[
\varphi_t+(\mu\varphi)^2-(\eta+\nu\varphi)^2>0.
\]
The blow-up of $\varphi$ at $t=0$ follows immediately from that of $\xi$. This completes the proof.
\end{proof}
\section{Proof of the Main Theorem}\label{sec:main_proof}
In this section, we present the proof of Theorem \ref{maintheorem}.
\begin{proof}

Let $R_k =\{x\in \mathbb{R}^n \,\vert \, x_i\in [-k,k],\,1\leq i\leq n\}$ and consider the modified Harnack quantity $$ H_k =H+ \phi_k$$ where $$\phi _k =\sum_{i=1}^n \left( \frac{b}{(x_i-k)^2}+ \frac{b}{(x_i+k)^2}\right) $$
for some $b>0$. Since $H_k\xrightarrow[k\to\infty]{C^0_{loc}}H $, it suffices to show that $H_k>0$. Therefore, we assume for the sake of contradiction that there is a point $(x_0,t_0)\in R_k \times (0, \infty)$ for which $H_k(x_0,t_0)=0$. By \cite{cck15}, we have $\lim_{t\searrow 0 }\varphi(t)=\infty, $ and so $\liminf_{t\searrow 0}H_k>0$. Moreover, since $H_k$ blows up at the boundary of $R_k$, it achieves its minimum in the interior of  $R_k$. Therefore, we may assume that $(x_0,t_0)$ is a local minimum and is minimal in time among all points $(x,t)$ for which $H_k(x,t)=0$. It follows that
\begin{align}
    \Box (H_k)(x_0,t_0) & \leq 0 \label{eq:box_H_1} \\ \nabla H_k(x_0,t_0)&=0 .\label{eq:gradient_H}
\end{align}
Set $\varphi_k= \varphi+ \phi_k$, so that $\varphi_k \xrightarrow[k\to\infty]{C^0_{loc}} \varphi$. From Lemma \ref{lem:computations}, we obtain that 
\begin{align*}
    \Box H_k\geq & 2\nabla u \cdot \nabla H_k + 2\frac{(\alpha-\beta)}{n\alpha^{2}} (H_k-\beta|\nabla u|^{2}-cF-\varphi_k)^{2}+ \Box\varphi_k-2\nabla u \cdot \nabla \varphi_k 
\\
& + (1+a - 2e^u) e^u \cdot (H_k-\beta |\nabla u|^{2}-cF-\varphi_k) + e^u \cdot |\nabla u|^2 (1 + a)(\alpha + 2 \beta - 2c) \nonumber \\
& + e^u \cdot |\nabla u|^2 (- 2 e^u)(2 \alpha + 2 \beta - 3c) + c(1+a-2e^u) e^u (1-e^u)(e^u - a) \nonumber. 
\end{align*}
Set $\gamma=\frac{2(\alpha-\beta)}{n\alpha^{2}}$, so that $\gamma>0$ by \eqref{iden:main_thm_1}. Then
\begin{align}
   ( \Box H_k)(x_0,t_0)&\geq \gamma (\beta|\nabla u|^{2}+cF+\varphi_k)^{2}-(1+a-2e^{u})e^{u}(\beta|\nabla u|^{2}+\varphi_k)+\Box \varphi_k-2\nabla u.\nabla\varphi_k \nonumber \\&+e^u \cdot |\nabla u|^2 (1 + a)(\alpha + 2 \beta - 2c)+e^u \cdot |\nabla u|^2 (- 2 e^u)(2 \alpha + 2 \beta - 3c) \nonumber\\&= \gamma(\varphi_k+cF)^{2}+(2e^{2u}-(1+a)e^{u})\varphi_k+\beta^{2}\gamma|\nabla u|^{4}+\Box \varphi_k-2\nabla u.\nabla\varphi_k \nonumber\\&+|\nabla u|^{2}(2\gamma\beta \varphi_k +2\gamma\beta c F-\beta (1+a-2e^{u})e^{u}+e^{u}(1+a)(\alpha+2\beta-2c)\nonumber \\ 
  & \hspace{15mm}-2e^{2u}(2\alpha+2\beta-3c))\nonumber\\&=\gamma\varphi_k^{2}+(2(1-c\gamma)e^{2u}+(1+a)(2c\gamma-1)e^{u}-2c\gamma a)\varphi_k+\gamma c^{2}F^{2}+\beta^{2}\gamma|\nabla u|^{4}\\& +\Box \varphi_k-2\nabla u.\nabla\varphi_k+|\nabla u |^{2}(-2\gamma\beta ca+2\gamma\beta \varphi_k+e^{2u}(6c-4\alpha-2\beta(1+c\gamma)) \nonumber \\& \hspace{55mm}+ e^{u}(1+a)(\beta(1+2\gamma c)+\alpha-2c))  \nonumber.
\end{align}

Define 
\begin{align*}
    \omega&=6c-4\alpha-2\beta(1+c\gamma), \\\delta&=(1+a)(\beta(1+2\gamma c)+\alpha-2c),
\end{align*}
so that $(1-c\gamma),\omega>0$ by \eqref{iden:main_thm_1} and \eqref{iden:main_thm_2}. Completing the square we have
\begin{align*}
    2(1-c\gamma)e^{2u}+(1+a)(2c\gamma-1)e^{u}&\geq -\frac{(2c\gamma-1)^{2}(1+a)^{2}}{8(1-c\gamma)},\\\omega e^{2u}+\delta e^{u}&\geq -\frac{\delta^{2}}{4\omega },\\2\gamma\beta\varphi_k|\nabla u|^{2}-2\nabla u.\nabla\varphi_k &\geq -\frac{|\nabla\varphi_k|^{2}}{2\gamma\beta\varphi_k},
\end{align*}
which gives
\begin{align*}
(\Box H_k)(x_0,t_0)\geq \Box\varphi_k+\gamma\varphi_k^{2}-\frac{(2c\gamma-1)^{2}(1+a)^{2}}{8(1-c\gamma)}\varphi_k+\beta^{2}\gamma|\nabla u|^{4}+|\nabla u|^{2}(-2\gamma\beta a c -\frac{\delta^{2}}{4\omega}) -\frac{|\nabla \varphi_k|^{2}}{2\gamma\beta\varphi_k}.
\end{align*}
Completing the square once again we have
\begin{align*}
    \beta^{2}\gamma|\nabla u|^{4}+|\nabla u|^{2}(-2\gamma\beta ac  -\frac{\delta^{2}}{4\omega})\geq -\frac{(-\gamma\beta ac-\frac{\delta^{2}}{8\omega})^{2}}{\gamma\beta^{2}},
\end{align*}
and so
\begin{align}
(\Box H_k)(x_0,t_0)&\geq \Box\varphi_k+\gamma\varphi_k^{2}-\frac{(2c\gamma-1)^{2}(1+a)^{2}}{8(1-c\gamma)}\varphi_k-\frac{1}{\gamma\beta^{2}}(\gamma\beta ac+\frac{\delta^{2}}{8\omega})^{2}-\frac{|\nabla \varphi_k|^{2}}{2\gamma\beta\varphi_k} \nonumber \\&\geq (\varphi_k)_{t}+ \gamma\varphi_k^{2}-\frac{(2c\gamma-1)^{2}(1+a)^{2}}{8(1-c\gamma)}\varphi_k- \frac{(2c\gamma-1)^{2}(1+a)^{2}}{8(1-c\gamma)}A-\frac{1}{\gamma\beta^{2}}(\gamma\beta ac+\frac{\delta^{2}}{8\omega})^{2} \nonumber\\&+\gamma\phi_k^{2} +(2A\gamma- \frac{(2c\gamma-1)^{2}(1+a)^{2}}{8(1-c\gamma)})\phi_k -\Delta \phi_k-\frac{|\nabla\phi_k|^{2}}{2\gamma\beta\phi_k} \nonumber\\&= (\varphi_k)_{t}+(\mu \varphi_k)^{2}-(\eta+\nu\varphi_k)^{2}+\gamma\phi_k^{2}-\Delta\phi_k-\frac{|\nabla \phi_k|^{2}}{2\gamma\beta\phi_k},
\end{align}
where we set
\begin{align*}
   A&= \frac{(2c\gamma-1)^{2}(1+a)^{2}}{16\gamma(1-c\gamma)},\\ \eta^{2}&=\frac{1}{\gamma\beta^{2}}(\gamma\beta ac+\frac{\delta^{2}}{8\omega})^{2}+\frac{(2c\gamma-1)^{2}(1+a)^{2}}{8(1-c\gamma)}A,\\\eta\nu&=\frac{(2c\gamma-1)^{2}(1+a)^{2}}{16(1-c\gamma)},\\ \mu^{2}&=\nu^{2}+\gamma.
\end{align*}
Note that $A,\eta,\nu,$ and $\mu$, satisfy the hypothesis of Lemma \ref{lem:computations_2}. As in Lemma \ref{lem:spacial_pos}, we may choose $b>0$ large (independent of the rectangle $R_k$) so that
\begin{align}
    \gamma\phi^{2}_k-\Delta \phi_k -\frac{|\nabla\phi_k|^{2}}{2\gamma\beta\phi_k}> 0
\end{align}
 on $R_k$. Combining the above we have
\begin{align*}
(\Box H_k)(x_{0},t_{0})> 0,
\end{align*}
which is a contradiction. 

\end{proof}
\section{Applications}\label{sec:applications}

In this section we use Theorem \ref{maintheorem} to prove Theorem \ref{thm:app} and Theorem \ref{thm:app_2}. 

\subsection{Classical Harnack}
First, we use our differential Harnack estimate to prove Theorem \ref{thm:app}, which gives a classical Harnack inequality comparing values of positive solutions at different points.

\begin{proof}
 Let $\Gamma(t)$ be any space-time line connecting $(x_{1},t_{1})$ and $(x_{2},t_{2})$ and as before define $u=\operatorname{log} f$.  By Theorem \ref{maintheorem} we have
 \begin{align*}
     u(x_{2},t_{2})-u(x_{1},t_{1})&=\int_{t_{1}}^{t_{2}}u_{t}+\nabla u \cdot\Gamma^{\prime}\,dt\\ &= \int_{t_{1}}^{t_{2}} \Delta u +|\nabla u|^{2}+F(u)  +\nabla u \cdot \Gamma^{\prime}\,dt\\
     &\geq \int_{t_{1}}^{t_{2}}-\frac{1}{\alpha}(\beta|\nabla u |^{2}+cF(u)+\varphi(t))+|\nabla u|^{2}+F(u)+\nabla u \cdot \Gamma^{\prime}\,dt\\&\geq -\frac{1}{4(1-\frac{\beta}{\alpha})}\int_{t_{1}}^{t_{2}}|\Gamma^{\prime}|^{2}dt+(1-\frac{c}{\alpha})\int_{t_{1}}^{t_{2}}F(u)\,dt \\&-\frac{\eta}{\alpha}\int_{t_{1}}^{t_{2}}\big[\frac{1}{\nu-\mu}\frac{e^{2\mu\eta t}}{1-e^{2\mu\eta t}}-\frac{1}{\nu+\mu}\frac{1}{1-e^{2\mu\eta t}}\big]\, dt-\frac{A(t_{2}-t_{1})}{\alpha}\\ 
     &= -\frac{1}{4(1-\frac{\beta}{\alpha})}\int_{t_{1}}^{t_{2}}|\Gamma^{\prime}|^{2}dt+(1-\frac{c}{\alpha})\int_{t_{1}}^{t_{2}}F(u)\,dt -\frac{A(t_{2}-t_{1})}{\alpha}\\&+\frac{\eta}{\alpha}\big[-\frac{1}{2\mu\eta(\nu+\mu)}\text{log}(1-e^{-2\mu\eta t})+\frac{1}{2\mu\eta(\nu-\mu)}\text{log}(e^{2\mu\eta t}-1)\big]_{t_{1}}^{t_{2}}\\ &=  -\frac{1}{4(1-\frac{\beta}{\alpha})}\frac{|x_{2}-x_{1}|^{2}}{t_{2}-t_{1}}+(1-\frac{c}{\alpha})\int_{t_{1}}^{t_{2}}F(u)\,dt-\frac{A(t_{2}-t_{1})}{\alpha}\\&+\frac{\eta}{\alpha}\big[-\frac{1}{2\mu\eta(\nu+\mu)}\text{log}(1-e^{-2\mu\eta t})+\frac{1}{2\mu\eta(\nu-\mu)}\text{log}(e^{2\mu\eta t}-1)\big]_{t_{1}}^{t_{2}}. 
 \end{align*}
Since $0<f\le1$, we have
\[
0\le F(u)=(1-f)(f-a)\le \frac{(1-a)^2}{4}.
\]
Therefore,
\[
\left(1-\frac{c}{\alpha}\right)F(u)
\ge
-\frac14\left(\frac{c}{\alpha}-1\right)_{+}(1-a)^2,
\]
where $r_+=\max\{r,0\}$. Hence
\[
\left(1-\frac{c}{\alpha}\right)
\int_{t_1}^{t_2}F(u)\,dt
\ge
-\frac{t_2-t_1}{4}
\left(\frac{c}{\alpha}-1\right)_{+}(1-a)^2.
\]
Substituting this estimate into the previous inequality yields
\[
\begin{aligned}
u(x_2,t_2)-u(x_1,t_1)
\ge\;&
-\frac{1}{4\left(1-\frac{\beta}{\alpha}\right)}
\frac{|x_2-x_1|^2}{t_2-t_1}
-\frac{t_2-t_1}{4}
\left(\frac{c}{\alpha}-1\right)_{+}(1-a)^2
-\frac{A(t_2-t_1)}{\alpha}
\\
&
+\frac{\eta}{\alpha}
\left[
-\frac{1}{2\mu\eta(\nu+\mu)}
\log\!\left(1-e^{-2\mu\eta t}\right)
+\frac{1}{2\mu\eta(\nu-\mu)}
\log\!\left(e^{2\mu\eta t}-1\right)
\right]_{t_1}^{t_2}.
\end{aligned}
\]
Exponentiating both sides gives the desired estimate.
 
If $f\geq 1$ and $c\geq \alpha$, then
\[
F(u)=(1-f)(f-a)\leq 0
\]
and
\[
1-\frac{c}{\alpha}\leq 0.
\]
Consequently,
\[
\left(1-\frac{c}{\alpha}\right)F(u)\geq 0.
\]
Returning to the preceding integral estimate and discarding this
nonnegative term, we obtain
\[
\begin{aligned}
u(x_2,t_2)-u(x_1,t_1)
\geq {}&
-\frac{1}{4\left(1-\frac{\beta}{\alpha}\right)}
\frac{|x_2-x_1|^2}{t_2-t_1}
-\frac{A(t_2-t_1)}{\alpha}
\\
&+
\frac{\eta}{\alpha}
\left[
-\frac{1}{2\mu\eta(\nu+\mu)}
\log\left(1-e^{-2\mu\eta t}\right)
+
\frac{1}{2\mu\eta(\nu-\mu)}
\log\left(e^{2\mu\eta t}-1\right)
\right]_{t_1}^{t_2}.
\end{aligned}
\]
Exponentiating gives the second assertion of the theorem.
\end{proof}

\subsection{Traveling wave solutions}

Finally, we prove Theorem \ref{thm:app_2}, which gives a bound on the wave speed of traveling wave solutions.

\begin{proof}
Recall that our
differential Harnack estimate gives
\[
\alpha\Delta u+\beta |\nabla u|^2+cF(u)+\varphi(t)\geq 0,
\]
where
\[
u=\log f,\qquad F(u)=(1-e^u)(e^u-a).
\]

We first rewrite the Harnack quantity in terms of $f$. Since
\[
\Delta u=\frac{\Delta f}{f}-\frac{|\nabla f|^2}{f^2},
\]
and from the equation
\[
f_t=\Delta f+fF(u),
\]
we have
\[
\Delta f=f_t-fF(u).
\]
Therefore
\[
\Delta u
=
\frac{f_t}{f}-F(u)-\frac{|\nabla f|^2}{f^2}.
\]
Substituting this into the differential Harnack inequality gives
\begin{equation}
\alpha\frac{f_t}{f}
+(\beta-\alpha)\frac{|\nabla f|^2}{f^2}
+(c-\alpha)F(u)
+\varphi(t)\geq0.
\label{eq:Hf}
\end{equation}

For a traveling wave solution we have
\[
f_t=s\,v_{x_n}.
\]
Thus \eqref{eq:Hf} becomes
\[
\alpha s\frac{v_{x_n}}{v}
-(\alpha-\beta)\frac{|\nabla v|^2}{v^2}
+(c-\alpha)(1-v)(v-a)
+\varphi(t)\geq0.
\]

Using
\[
|v_{x_n}|\leq |\nabla v|,
\]
we obtain
\[
0\leq
\alpha |s|\frac{|\nabla v|}{v}
-(\alpha-\beta)\frac{|\nabla v|^2}{v^2}
+(c-\alpha)(1-v)(v-a)
+\varphi(t).
\]

Set
\[
X=\frac{|\nabla v|}{v}.
\]
Completing the square gives
\[
-(\alpha-\beta)X^2+\alpha |s|X
\leq
\frac{\alpha^2s^2}{4(\alpha-\beta)}.
\]
Therefore
\[
0\leq
\frac{\alpha^2s^2}{4(\alpha-\beta)}
+(c-\alpha)(1-v)(v-a)
+\varphi(t).
\]

Taking the limit as $t\to \infty$ gives
\[
0\leq
\frac{\alpha^2s^2}{4(\alpha-\beta)}
-(c-\alpha)a+\varphi_\infty .
\]


Hence we obtain the following lower bound for the wave speed:
\[
s^2\geq
\frac{4(\alpha-\beta)}{\alpha^2}
\left((c-\alpha)a-\varphi_\infty\right).
\]

In particular, whenever
\[
(c-\alpha)a>\varphi_\infty,
\]
the differential Harnack estimate gives a nontrivial lower bound on
the speed of any positive traveling wave solution.
\end{proof}

\paragraph{\textbf{Acknowledgements:}}  S. Teng's research was partially supported by Cornell University's Summer Program for Undergraduate Research. 
\bibliographystyle{plain}
\bibliography{bio}

\end{document}